\documentclass[12pt,a4paper]{article}

\usepackage[T2A]{fontenc}
\usepackage{indentfirst}
\usepackage{misccorr}
\usepackage{graphicx}
\usepackage{amsmath,amssymb,amsthm}


\newtheorem{theorem}{Theorem}
\newtheorem{lemma}{Lemma}

\newcommand\tig{\tilde g}

\title{Helson zeta functions for characters with finitely many values}
\author{I. Bochkov \thanks{bv1997@ya.ru}\\
\\
Department of Mathematics and Computer Science,\\
St Petersburg State University, Russia}
\date{}

\begin{document}


\maketitle
\begin{abstract} We show that the analytic continuations of Helson zeta functions $  \zeta_\chi (s)= \sum_1^{\infty}\chi(n)n^{-s} $ can have essentially arbitrary poles and zeroes  in the strip $ 21/40 < \Re s < 1 $ (unconditionally), and in the whole critical strip $ 1/2 < \Re s <1 $ under Riemann Hypothesis for the function $ \chi $ taking values in cubic roots of unity. If the sets are symmetric with respect to the real axis, the same can be achieved with $ \chi $ taking values $ \pm 1 $. 
\end{abstract}

Let $ \chi \colon \mathbb N \to \mathbb T $, $ \mathbb T = \{ z \in \mathbb C \colon |z|=1 \}$, be a totally multiplicative function. The Helson zeta function $ \zeta_\chi $ is defined  as follows 
\begin{equation}\label{H} \zeta_\chi (s)= \sum_1^{\infty}\chi(n)n^{-s} . \end{equation}  

With this definition, $ \zeta_\chi $ is an analytic function in the halfplane $ \Re s > 1 $. It satisfies the Euler product formula, \[ \zeta_\chi (s)=\prod_p \frac{1}{1-\chi(p)p^{-s}} . \] 
In particular, the function $ \zeta_\chi $ has no zeroes for $ \Re s > 1 $. 

The interest to this particular generalization of the Riemann zeta function stems from a result of H. Helson \cite{Hels} asserting that for almost all $ \chi$'s the function $ \zeta_\chi $ extends analytically to the halfplane $ \Re s > 1/2 $ and the extension has no zeroes in this halfplane. Almost all here refers to the measure on the functions $ \chi $ induced by the standard product measure on the infinite dimensional torus $ \mathbb T^\infty $ via identification of $ \chi $ with the sequence $ \{ \chi (p) \} \in \mathbb T^\infty $ of its values at primes. 

In the sense of Helson's theorem, the Riemann zeta function  is in the exceptional set because of its pole at $ 0 $. This calls for further study of analytic continuation of Helson zeta function for $ \chi $'s from the exceptional set. In \cite{S} it was shown that the set of zeroes of the meromorophic continuation of $ \zeta_\chi $ in the strip $ 1/2 < \Re s<1$ can be essentially arbitrary. 

\begin{theorem}{\cite[Theorem 1.4]{S}}
For any set $ \mathfrak O $ in the strip $ 1/2  < \Re s<  39/40 $ which has no accumulation points off the line $ \Re s = 1/2 $ there exists a completely multiplicative function $ \chi $ such that the Helson zeta function $ \zeta_\chi $ admits meromophic extension to the halfplane $ \Re s > 1/2 $, and 
\[ \left\{ s : \zeta_\chi ( s ) = 0,   \Re s > \frac 12 \right\} = \mathfrak O . \] 

If the RH is assumed then the same assertion holds with $ 1$ in the place of $ 39/40 $.
\end{theorem}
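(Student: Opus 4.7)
The plan is to realize $\zeta_\chi$ as a prescribed meromorphic function $F$ whose zero set in the open strip $\Re s > 1/2$ is precisely $\mathfrak O$. I would construct $F$ explicitly as the product of $\zeta(s)$ with a Weierstrass-type product carrying a simple zero at each $\rho \in \mathfrak O$: the hypothesis that $\mathfrak O$ has no accumulation point off $\Re s = 1/2$ is exactly what makes such a product converge to a meromorphic function in the strip and exhibit polynomial growth on vertical lines. The task then becomes choosing the values $\chi(p) \in \mathbb T$ prime by prime so that the meromorphic continuation of the Euler product $\prod_p (1-\chi(p)p^{-s})^{-1}$ equals $F$.

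The next step is to convert the problem to an additive, Dirichlet-series approximation problem. On $\Re s > 1$, taking logarithms of the Euler product gives
\[ \log \zeta_\chi(s) = \sum_p \chi(p) p^{-s} + R_\chi(s), \qquad R_\chi(s) = \sum_p \sum_{k \ge 2} \frac{\chi(p)^k}{k\, p^{ks}}, \]
and $R_\chi$ is analytic and uniformly bounded, independently of $\chi$, in $\Re s > 1/2$. Thus producing $\chi$ with $\zeta_\chi = F$ is equivalent to constructing a Dirichlet series $\sum_p \chi(p) p^{-s}$ with $\chi(p) \in \mathbb T$ whose analytic continuation to the strip equals $\log F(s) - R_\chi(s)$, the logarithm being taken on a simply connected domain that avoids the zeros of $F$ and of $\zeta$.

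The values of $\chi$ are then chosen by an iterative scheme in the spirit of Voronin's universality theorem. Fix an exhaustion $K_1 \subset K_2 \subset \cdots$ by compact subsets of the strip avoiding $\mathfrak O$ and the zeros of $\zeta$, together with thresholds $T_1 < T_2 < \cdots$. At stage $j$, with $\chi(p)$ already fixed for primes $p \le T_{j-1}$, use a Kronecker-type equidistribution statement on the finite-dimensional torus $\mathbb T^{\pi(T_j) - \pi(T_{j-1})}$ (which applies because $\{\log p\}_p$ is $\mathbb Q$-linearly independent) to select $\chi(p)$ for $T_{j-1} < p \le T_j$ so that the truncated Euler product approximates $F$ on $K_j$ to within $2^{-j}$. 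Passing to the limit and using normal-family arguments to upgrade locally uniform convergence to meromorphic equality, one obtains the desired $\chi$, and a Hurwitz-type argument ensures that the zero set of the analytic continuation of $\zeta_\chi$ coincides with $\mathfrak O$ (no zeros are created nor cancelled at the limit).

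The principal obstacle, and the source of the numerical threshold $39/40$, lies in controlling the approximation uniformly on compact subsets that extend deep into the critical strip. This requires two quantitative inputs: a Carlson-type mean-square estimate for partial Euler products on vertical lines, and a zero-density estimate for $\zeta$ strong enough to guarantee that the branch of $\log \zeta$ used in the construction is well defined and bounded on the domains $K_j$. Unconditionally, the currently available zero-density bounds allow both estimates to close only in the range $\Re s < 39/40$; under the Riemann Hypothesis, $\zeta$ has no zeros in the critical strip, so $\log \zeta$ is globally defined there and the same argument extends all the way to $\Re s < 1$.
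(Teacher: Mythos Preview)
Your approach is fundamentally different from the one Seip uses (as summarized in this paper), and it contains a genuine gap.

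Seip does not try to realize $\zeta_\chi$ as a prescribed target function $F$. Instead he splits the primes into a subset $\mathcal P$ and its complement, so that $\zeta_\chi = r_1 r_2$ with $r_j$ the partial Euler product over the corresponding set. The factor $r_1$ is built constructively by working with its \emph{logarithmic derivative}: one writes down a meromorphic function with simple poles at the points of $\mathfrak O$ using ``dipole'' blocks $(z-\rho)^{-1}-(z-\rho')^{-1}$, represents it as a Mellin transform $\int_1^\infty q(x)x^{-s}\,dx$ with $q$ decaying like a small negative power of $x$, and then chooses $\chi(p)$ for $p\in\mathcal P$ by an Abel-summation argument so that $\int_1^x q - \sum_{p\le x,\,p\in\mathcal P}\chi(p)\log p$ is $O(x^{21/40+\varepsilon})$. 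The factor $r_2$ is handled by the probabilistic Helson argument, giving a zero-free analytic continuation. The constant $39/40$ comes from the Baker--Harman--Pintz theorem on primes in intervals $[x,x+x^{21/40}]$ combined with the power decay of $q$ produced by the dipole construction --- not from zero-density estimates for $\zeta$.

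Your scheme breaks at the step ``passing to the limit and using normal-family arguments to upgrade locally uniform convergence to meromorphic equality.'' Arranging that the truncated product $\prod_{p\le T_j}(1-\chi(p)p^{-s})^{-1}$ approximates $F$ on $K_j$ says nothing about the analytic continuation of the full Dirichlet series $\zeta_\chi$, because the Euler product diverges for $\Re s\le 1$; the truncations are not partial sums of a normally convergent sequence whose limit is $\zeta_\chi$ in the strip. Nor does your iteration ensure that the later choices of $\chi(p)$ for $T_j<p\le T_{j+1}$ leave the approximation on $K_j$ intact --- each new factor perturbs the product on every previous compact. The Voronin universality machinery concerns translates of a fixed function and does not supply a mechanism for manufacturing a single $\chi$ whose zeta function has a prescribed continuation. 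Finally, your diagnosis of the threshold $39/40$ is off: it is a prime-gap input, not a zero-density input.
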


The proof of this theorem in \cite{S} gives $ \zeta_\chi $ in the form of a product of two functions, $ \zeta_\chi = r_1 r_2 $. The function $r_1 $ is determined by the values of $ \chi $ on a subset, $ \mathcal P $, of primes. The set $ \mathcal P $ and the restriction $ \left. \chi\right|_{ \mathcal P } $ are defined explicitly in such a way that $ r_1 $ has the required analytic extension with the zero set $ \mathfrak O $, while $ r_2 $ is determined by the values of $ \chi $ one the rest of the primes and has zero-free analytic extension up to the critical line $ \Re s = 1/2 $. Notice that the function $ r_2 $ is constructed by probabilistic methods of the Helson's theorem rendering the overall assertion non-constructive. The construction of $ r_1 $ uses "dipoles" of the form $ ( z - \rho )^{ -1 } - ( z - \rho^\prime )^{-1} $, where $ \rho \in \mathfrak O $ and $ \rho^\prime $ is a pole nearby $ \rho $, as building blocks for its logarithmic derivative. The set of poles occuring in this way is not controlled. In the following theorem we show that the sets of zeroes and poles can be both made arbitrary given sets, and, moreover this can achieved by a character $ \chi $ taking just three values. If we add the assumption of symmetricity for the sets of poles and zeroes, then the result holds with $ \chi $ taking just two values, $ \pm 1 $.  The theorem is as follows.

To account for multiple zeroes and poles we define a multiset as a pair made of a subset $ S \subset \mathbb C $ and a function $ S \longrightarrow \mathbb N $. When we say that the set of zeroes of an analytic function $ f$ coincides with a multiset, $ M_S $, we mean that $ f ( s ) = 0 $ iff $ s \in S $, and the multiplicity of zero at $ s \in S $ is $ m ( s ) $. A similar terminology is used for poles.

\begin{theorem}
Let $ Z $ and $ P $ be arbitrary disjoint multisets $  Z$ and $ P $ in the strip $ 21/40 < \Re s<1$ having no accumulation points off the line $ \Re s = 21/40  $. 
 
1. There exists a completely multiplicative function $ \chi \colon \mathbb N \longrightarrow \{ e^{ \pm 2\pi i /3 } , 1 \} $ such that $ \zeta_\chi $ admits meromorphic continuation to the halfplane $ \Re s > 21/40 $ with $ Z $ the set of its zeroes and $ P $ the set of its poles. If the Riemann Hypothesis holds then the assertion stays true with $ 21/40 $ replaced by $ 1/2 $.
 
2. If additionally the multisets $ Z$ and $ P $ are symmetric with respect to the real axis then there exists a completely multiplicative function $ \chi \colon \mathbb N \longrightarrow \{ \pm 1 \} $ such that $ \zeta_\chi $ admits meromorphic continuation to the halfplane $ \Re s > 21/40 $ with $ Z $ the set of its zeroes and $ P $ the set of its poles. If the Riemann Hypothesis holds then the assertion stays true with $ 21/40 $ replaced by $ 1/2 $. 
\end{theorem}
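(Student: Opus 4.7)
The plan is to adapt the factorization $\zeta_\chi = r_1 r_2$ from \cite{S}, where $r_1$ is an explicit partial Euler product carrying the prescribed zeros and poles and $r_2$ is a complementary factor obtained by probabilistic (Helson-type) methods, with two new ingredients: first, $r_1$ must now encode a prescribed pole set $P$ in addition to the prescribed zero set $Z$; second, the character $\chi$ must take values only in $\{1,\omega,\omega^2\}$ with $\omega=e^{2\pi i/3}$, or in $\{\pm 1\}$ under the symmetry assumption.

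I would begin by partitioning the primes into two disjoint positive-density sets $\mathcal{P}_1,\mathcal{P}_2$ and defining $r_j(s)=\prod_{p\in\mathcal{P}_j}(1-\chi(p)p^{-s})^{-1}$. On $\mathcal{P}_2$, I would draw $\chi(p)$ independently with the uniform distribution on the admissible value set; a version of Helson's theorem for finite-valued random characters (a routine adaptation of the original proof using Kolmogorov-type convergence of random Dirichlet series) should give an analytic, zero- and pole-free continuation of $r_2$ to $\Re s > 1/2$ almost surely, which is ample for the target strip.

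The main effort is the construction of $r_1$. The logarithmic derivative $r_1'/r_1$ equals $-\sum_{p\in\mathcal{P}_1}\chi(p)\log p\cdot p^{-s}$ modulo a term holomorphic in $\Re s>1/2$, and we need this Dirichlet series to continue meromorphically with simple poles of residue $\pm m(\rho)$ at each $\rho\in Z\cup P$. The plan is to split $\mathcal{P}_1$ into countably many blocks $\mathcal{P}_1(\rho)$, one per $\rho\in Z\cup P$, where $\mathcal{P}_1(\rho)$ consists of primes from a short interval $[N_\rho, N_\rho+H_\rho]$ with $N_\rho$ chosen so that the Mellin transform of the block produces the correct pole location, and to pick $\chi|_{\mathcal{P}_1(\rho)}$ to realise the desired complex amplitude. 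The crucial point, replacing the ``dipole'' construction of \cite{S}, is that the convex hull of the three values $\{1,\omega,\omega^2\}$ contains a disk around $0$, so the empirical average of $\chi(p)$ on a block can be tuned to any complex phase of sufficiently small magnitude, yielding a genuine monopole $\pm m(\rho)/(s-\rho)$ rather than a difference. In the real-symmetric case, pairing $\rho$ with $\bar\rho$ reduces the required amplitude to a real number, so two values $\pm 1$ suffice.

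The principal obstacle is controlling the error. Each block's contribution differs from its target monopole by a holomorphic remainder, and one must show that these remainders sum to a function holomorphic in $\Re s > 21/40$ (respectively $\Re s > 1/2$ under RH) and that no spurious zero or pole appears. This forces the intervals $[N_\rho,N_\rho+H_\rho]$ to be simultaneously short enough to sharply localise the pole of $r_1$ at $\rho$ and long enough to contain many primes with each of the prescribed $\chi$-values. This is exactly the regime governed by short-interval prime-existence estimates of Baker--Harman--Pintz type: primes exist in every interval $[x,x+x^{21/40}]$ for $x$ large, and the analogous statement holds with exponent $1/2+\varepsilon$ under RH; this is where the constant $21/40$ in the theorem comes from, and it is the step that I expect to require the most delicate quantitative work.
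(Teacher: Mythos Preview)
Your plan follows the older $r_1 r_2$ paradigm of \cite{S} rather than the argument the paper actually gives, and the $r_1$ step as you describe it has a real gap.

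The paper's proof is fully constructive and does \emph{not} split the primes or use any probabilistic Helson-type step (constructivity is explicitly advertised as one of the two novelties over \cite{BR}). Working with $\tilde g(s)=-\sum_p\chi(p)\log p\, p^{-s}$, it proceeds in three steps. First (Lemma~3), an explicit meromorphic $g$ in $\Re s>21/40$ with the prescribed simple poles and residues and with $\sup_{\Re z>1}|z|^2|g(z)|<\infty$ is built from blocks $g_{z_0}(z)=[(z-z_0)(z-z_0+1)^{2n}]^{-1}$, summed against a damping factor $G_1(z)=e^{-z}z^{-2}$. Second (Lemma~2), Hardy-space/Paley--Wiener theory represents any such $g$ as $\int_1^\infty q(x)x^{-s}\,dx$ with a continuous $q\to 0$ (real if $g$ is real on $\mathbb R$, which is where the symmetry hypothesis enters). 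Third (Lemma~1), a greedy algorithm over \emph{all} primes chooses $\chi(p)\in\{1,e^{\pm 2\pi i/3}\}$ (or $\pm1$ when $q$ is real) so that $\int_1^\infty q\,x^{-s}dx-\sum_p\chi(p)\log p\,p^{-s}$ extends holomorphically to $\Re s>21/40$: one partitions $[2,\infty)$ into intervals $[x_j,x_j+x_j^{21/40})$ and on each picks $\chi(p)$ prime by prime so that the running discrepancy $r(x)=\int_1^x q-\sum_{p\le x}\chi(p)\log p$ stays $O(\log x)$. The Baker--Harman--Pintz bound is used only to guarantee enough primes in each such interval for this greedy step, not to place poles.

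Your $r_1$ construction, by contrast, assigns to each $\rho$ a block $\mathcal P_1(\rho)$ of primes from a \emph{single} short interval $[N_\rho,N_\rho+H_\rho]$. That contributes a finite Dirichlet polynomial, which is entire: it cannot carry a pole at $\rho$ at all, so the phrase ``$N_\rho$ chosen so that the Mellin transform of the block produces the correct pole location'' has no content. Manufacturing a genuine singularity from a Dirichlet series over primes requires infinitely many primes whose partial sums diverge in a controlled way as $s\to\rho$; Seip's dipoles achieve this with primes at infinitely many scales, and your sketch provides no substitute mechanism. The paper sidesteps this difficulty entirely by first writing down the target meromorphic function analytically and only then approximating its Mellin density by the full prime sum, rather than trying to synthesise each pole from its own block of primes.
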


Notice that the symmetricity assumption is obviously necessary in the case of $ \chi $ taking values $ \pm 1 $, for the function $ \zeta_\chi $ is real on the real line in this situation. 

Theorem 2 strengthens our result from \cite{BR} where the existence of characters $ \chi \colon \mathbb N \to \mathbb T $ with the given sets of zeroes and poles was established, in two directions. The first improvement, already mentioned, is that it is now possible to get the result with $ \chi $'s taking just two or three values (depending on whether the symmetry assumption is imposed on $ Z $ and $ P $). The second, perhaps more important, novelty is that the proof is constructive. We do not use any probabilistic arguments and present an algorithm, determining $ \chi ( p ) $ for each prime $ p $.  

The first part of Theorem 2 holds, with the same proof, if the set $ \{ e^{ \pm 2\pi i /3 } , 1 \} $ in the formulation is replaced with $ \{ z\colon  z^l = 1 \} $, for any $ l > 3 $. 

The proof of Theorem 2 is based on (i) a novel construction of the character $ \chi $ which allows to approximate the Mellin transform of a function using the logarithmic derivative of $ \zeta_\chi $, (ii) a special interpolation procedure combined with the elementary Hardy classes theory allowing to control poles. 

Let us notice that an analog of the Helson theorem can be shown to hold for the measure generated by purely discrete measure on $ \mathbb T $ supported on roots of unity and giving the equal weight to every root. The characters $ \chi $ constructed in Theorem 2 thus lie in the exceptional set even with respect to this finer measure.

Throughout the paper, all the sums with summation variable $ p $ range over primes in an interval specified in the sum limits. No limits means that the sum ranges over all primes.

\medskip 
\textit{Acknowledgements.} We are indebted to R. Romanov for infinite help in work, K. Seip who suggested the question about poles, and last, but not least, F. Petrov for asking a question about characters with $ \pm 1 $ values. 

\bigskip 

\bfseries 
\begin{center}
Logarithmic derivative
\end{center}
\mdseries

Let $ \zeta_\chi $ be the function we seek to find. Consider $g(s)=\frac{\zeta_\chi'(s)}{\zeta_\chi (s)}$. From the Euler product representation we have 
 \begin{align*} g(s)=-\sum_{ n=1 }^\infty  \chi(n)\Lambda(n)n^{-s} = -\sum_{p,a} \chi(p^a)\Lambda(p^a)p^{-as} = \\ -\sum_p \chi(p)\Lambda(p)p^{-s} - \sum_{p, a \ge 2} \chi(p^a )\Lambda(p^a)p^{-as}, \end{align*} where $p$ ranges over the primes, $ a $ over the naturals, and $ \Lambda $ is the von Mangoldt function. The second sum in the rhs is absolutely convergent for $ \Re s > 1/2 $, hence the poles and residues of $ g $ are exactly those of the function \begin{equation}\label{tgg} \tig (s)= -\sum_{p}  \chi(p)p^{-s} \log p . \end{equation}

Thus the problem is reduced to the one of constructing the function $ \tig $ with the required poles and residues. We are going to seek the function $\tig $ in the form

\[ \tig (s) = h(s)+\int_1^{\infty}q(x)x^{-s} dx, \Re s>1 , \] where $h$ is analytic in the halfplane $ \Re s> 1/2 $, and $ q ( x ) = o ( 1) $, $ x \to +\infty $. 

\begin{lemma}
Let $ q $ be a continuous function on $ [1, +\infty ) $, and $ q ( x ) = o ( 1) $, $ x \to +\infty $. 
Then there exists a completely multiplicative function $\chi \colon \mathbb N \longrightarrow \{ e^{ \pm 2\pi i /3 } , 1 \} $  such that the function
\[  \int_1^{\infty}q(x)x^{-s} dx - \sum_{p} \chi(p)p^{-s} \log p, \]
initially defined in the halfplane $ \Re s > 1 $, extends analytically to the halfplane $ \Re s > 21/40 $ unconditionally, and to the halfplane $ \Re s > 1/2 $ if the RH holds.

If additionally the function $ q $ is real, then there exists a completely multiplicative function $\chi \colon \mathbb N \longrightarrow \{ \pm 1 \} $ with the same properties.
\end{lemma}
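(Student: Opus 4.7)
The plan is to reduce the statement to a discrepancy bound via Abel summation and then to build $\chi$ block by block. For $\Re s > 1$, integration by parts gives
\[
\int_1^\infty q(x) x^{-s} dx - \sum_p \chi(p) p^{-s} \log p = s \int_1^\infty (Q(x) - C(x)) x^{-s-1} dx,
\]
where $Q(x) = \int_1^x q(t)\, dt$ and $C(x) = \sum_{p \le x} \chi(p) \log p$; both boundary terms vanish since $q = o(1)$ forces $Q(x) = o(x)$ and trivially $C(x) = O(x)$. Hence it will suffice to construct $\chi$ so that $F(x) := Q(x) - C(x) = O(x^\beta)$ with $\beta = 21/40$ unconditionally, and $\beta = 1/2 + \varepsilon$ under RH; the right-hand integral is then holomorphic on $\Re s > \beta$.

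For the construction, first fix $T_0$ with $|q(x)| \le c/3$ on $[T_0, \infty)$, where $c > 0$ is the density constant in the Baker--Harman--Pintz short-interval theorem $\theta(x + x^{21/40}) - \theta(x) \ge c\, x^{21/40}$ (valid for $x$ large); this is possible because $q = o(1)$. Set $\chi(p) = 1$ for $p < T_0$: the associated finite Dirichlet sum and $\int_1^{T_0} q(t) t^{-s} dt$ are both entire and can be absorbed into the analytic part. Partition $[T_0, \infty)$ into blocks $B_k = [T_k, T_{k+1})$ with $H_k := T_{k+1} - T_k = T_k^{21/40}$, and denote $s_k = \sum_{p \in B_k} \log p$, $t_k = \int_{B_k} q(t)\, dt$. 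Then BHP yields $s_k \ge c H_k$, whereas $|t_k| \le (c/3) H_k \le s_k/3$, so $t_k$ lies strictly inside the equilateral triangle $s_k \cdot \mathrm{conv}\{1, \omega, \omega^2\}$ of inradius $s_k/2$. Decompose $t_k/s_k = a_k + b_k \omega + c_k \omega^2$ with $a_k, b_k, c_k \ge 0$ and $a_k + b_k + c_k = 1$, and within $B_k$ split the primes in order into three consecutive groups whose cumulative $\log p$-sums approximate $a_k s_k, b_k s_k, c_k s_k$; assign $\chi = 1, \omega, \omega^2$ to the respective groups. Then $\sum_{p \in B_k} \chi(p) \log p = t_k + O(\log T_k)$, the rounding error being at most one $\log p$ per color.

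Consequently $F$ jumps by $O(\log T_k)$ across each block, so summing over the $K \sim x^{19/40}$ blocks up to $x$ gives $|F(T_K)| = O(x^{19/40} \log x)$; inside a block one has $|F(x) - F(T_K)| = O(H_K) = O(x^{21/40})$ by the triangle inequality, which dominates. Hence $|F(x)| = O(x^{21/40})$ globally and the unconditional claim follows from the first paragraph. Under RH the same scheme with $H_k = T_k^{1/2} \log^3 T_k$ substitutes the sharper PNT error $O(\sqrt{x}\log^2 x)$ for BHP and yields $|F(x)| = O(x^{1/2 + \varepsilon})$ for every $\varepsilon > 0$. For the $\pm 1$ case with real $q$, the triangle degenerates to the interval $[-s_k, s_k]$, which still contains the real target $t_k \in [-H_k/3, H_k/3]$; each block is split into two consecutive segments matching the positive and negative proportions of $t_k$ and the construction runs identically. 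The principal obstacle is the short-interval prime counting input: BHP pins the unconditional exponent to exactly $21/40$ because below this threshold one cannot guarantee enough primes per block for the target to lie inside the achievable triangle with the margin needed to absorb the $O(\log T_k)$ rounding error; any improvement on the exponent would require stronger lower bounds on $\theta(x + h) - \theta(x)$ in short intervals.
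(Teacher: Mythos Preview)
Your proof is correct and shares the paper's Abel-summation reduction and block partition $[T_k,T_{k+1})$ with $T_{k+1}-T_k=T_k^{21/40}$, but the assignment of $\chi(p)$ inside each block is genuinely different. The paper processes the primes of a block one at a time, greedily setting $\chi(p)$ to the cube root of unity whose argument is closest to that of the current running error; a short trigonometric estimate shows each such step either shrinks the error by at least $(\log p)/4$ or leaves it below $3\log p$, so that at every block endpoint one has $r(x_j)=O(\log x_j)$ with \emph{no} accumulation across blocks. Your barycentric scheme instead computes the block target $t_k$ in advance, writes $t_k/s_k$ as a convex combination of $1,\omega,\omega^2$, and splits the primes into three consecutive runs of the matching weights; this produces a per-block error $O(\log T_k)$ that \emph{does} accumulate, to $O(x^{19/40}\log x)$, but that is still safely below the $x^{21/40}$ threshold. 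The paper's greedy variant thus gives sharper endpoint control and is fully online (each $\chi(p)$ depends only on primes before it), whereas your version is conceptually cleaner but needs the whole block's prime content before any value is assigned. One minor point: your in-block bound $|F(x)-F(T_K)|=O(H_K)$ implicitly uses $s_K=O(H_K)$, which needs Brun--Titchmarsh; the trivial estimate gives only $O(H_K\log T_K)$, though this is equally sufficient for the analytic continuation.
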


\begin{proof} 
 Arguing as in \cite[8.1]{S} we use the identity  
\begin{align*}
 \int_1^{\infty}q(x)x^{-s}dx -\sum_p \chi(p)p^{-s}\log p = \\ s\int_1^{\infty}\left( \int_1^xq(y)dy-\sum_{p\le x } \chi(p) \log p \right)x^{-s-1}dx . \end{align*} 
It suffices to show that there exists a $ \chi $ such that 
\begin{equation}\label{2140-e}
   r(x):= \int_1^x q(y)dy-\sum_{p\le x } \chi(p)\log p = O \bigl(x^{\frac{21}{40}} \log x \bigr).
\end{equation}
Let 
\begin{equation}\label{xj} x_0=2, x_{j+1}=x_j+x_j^{\frac{21}{40}}. \end{equation} 
It is enough then to establish \eqref{2140-e} at the sequence $ x= x_j$. Indeed, if it is satisfied for $ x = x_j $ then for $x \in [ x_j , x_{ j+1} ) $ we have
$$ r(x) = r(x_j) + \int_{x_j}^x q(y)dy-\sum_{x_j<p\le x } \chi(p)\log p .$$ The first term in rhs is $O(x^{ 21/40 } \log x )$ by assumption, \[ \int_{x_j}^xq(y)dy = O(x^{\frac{21}{40}}) \] by the boundedness of $ q$, and the rightmost term in the right hand side is trivially $ O(x^{21/40}\log x) $.
 
 It remains to choose $ \chi $ so that $ r(x_j ) = O\bigl(x_j^{21/40}\log x_j \bigr) $. In fact we are going to choose it so that $ r ( x_j ) = O ( \log x_j ) $. This is done by induction in $ j $. 
%
 
 Denote by $ c_1 \in [ -\pi , \pi ) $ the argument of the number
 \[ \rho_j := r( x_j ) + \int_{ x_j }^{ x_{j+1}} q ,\] and let $ k_1 \in \{ 0, \pm 1 \} $ be chosen so that $ | 2k_1\pi /3 - c_j | \le \pi/3 $. Let $ p_1 $ be the smallest prime in $ [ x_j , x_{ j+1 } ) $, and define $ \chi (p_1) = e^{ 2i k_1\pi/3  } $. Then we take the number $ \rho_j - \chi(p_1) \log p_1 $, define $ c_2 \in [ -\pi, \pi ) $ to be its argument, choose $ k_2 \in \{ 0, \pm 1 \} $ so that $ | 2k_2\pi /3 - c_2  | \le \pi/3 $, and let $ \chi ( p_2 ) = e^{ 2i k_2\pi/3  } $ with $ p_2 $ the smallest prime in $ [ x_j , x_{ j+1 } )$ larger than $ p_1 $, and so on through all primes in $ [x_j, x_{ j+1 } ) $. We have 
  \begin{align} r ( x_{j+1} ) = r( x_j ) + \int_{ x_j }^{ x_{j+1}} q - \sum_{ p \in [ x_j , x_{ j+1} ) } \chi ( p ) \log p  = \nonumber \\ \rho_j - e^{ 2i k_1\pi/3  } \log p_1 -  \sum_{ p \in ( p_1 , x_{ j+1} ) } \chi ( p ) \log p . \nonumber 
  \end{align} 
  Then by elementary trigonometry \begin{align} \left| \rho_j - e^{ 2i k_1\pi/3 } \log p_1 \right| =  \left| |\rho_j| - e^{ i(2 k_1\pi/3 - c_1)  } \log p_1 \right| \le \nonumber \\ \left\{ \begin{array}{rl} | \rho_j | -  (\log p_1 ) / 4 , &\log p_1 \le | \rho_j |/2 , \cr   3\log p_1, & \log p_1 > | \rho_j |/2. \end{array} \right. \label{step}\end{align} because $ 2 \cos ( 2k_1\pi /3 - c_j ) \ge 1  $, and $ | a - e^{ i\pi / 3 } b | \le a - b/4 $ for real $ a , b $ whenever $ 0 \le b \le a/2 $. Repeating this estimate with $ \rho_j^1 = \rho_j - e^{ 2i k_1\pi/3 } \log p_1 $ in the place of $ \rho_j $, we get 
  \[ \left| \rho_j^1 - e^{ 2i k_2\pi/3 } \log p_2 \right| \le \max\left\{ \left| \rho_j \right| - \frac{\log p_1 + \log p_2}4 , 3\log p_2 \right\} , \] and so on. We are now going to use the fact that the interval $[x_j, x_{j+1}) $ contains at least $ C_1 x_j^{21/40}/\log x_j $ primes (\cite{BH}, p. 562) for some constant $ C_1>0 $ independent of $ j $. It shows that eventually, after all primes on the interval $ [ x_j , x_{ j+1} ) $ are accounted for, the first of the two numbers over which the maximum is taken, $ \left| \rho_j \right| - (\log p_1 + \log p_2 + \dots )/4 $, is not greater than $ \left| \rho_j \right| - C_1 x_j^{ 21/40}/4 $. 
  The other number in the maximum is obviously estimated above by   $ 3 \log p_* \le 3 \log x_{ j+1 } $, $ p_* $ being the largest prime on $ [ x_j , x_{ j+1} ) $. Thus, 
  \[ | r ( x_{ j+1 } ) | \le \max \left\{ | \rho_j | -  C_1 x_j^{ 21/40 }/4  , 3\log x_{ j+1 }  \right\} \]
  On the other hand, 
  \[ |\rho_j | \le | r ( x_j ) | + o ( x_j^{ 21/40 } ) \]
 since $ q $ is assumed to vanish at infinity. Picking $ M $ large enough so that the $ o ( x^{ 21/40 } )$-term gets smaller than $  C_1 x_i^{ 21/40 } / 8 $ for $ i \le M $, we obtain that whenever $ j \ge M $ 
 \[ | r ( x _{ j+1 } ) | \le \max \left\{ | r ( x_j )| , 3\log x_{ j+1 } \right\} . \] 
 It follows that $ | r(x_{ j+1 } ) | = O (\log x_{ j+1 }) $, as required. For the implied constant one can take the maximum over $ | r ( x _i )|/ \log x_i $, $ i \le M $, and $ 3 $. This proves the unconditional part of the assertion.

Assuming the Riemann hypothesis we follow the same argument with $ x_{i+1}=x_i + 4x_i^{1/2} \log x_i $ instead of \eqref{xj}, and take into account that  the number of primes in the interval $ [ x , x + c \sqrt{x} \log x ] $ is estimated below by $ \sqrt x $ for all $ c > 3 $ \cite{Dudek}. This gives  \[  r ( x ) = O( \sqrt{x} \log^2 x ) \] which implies the required assertion in the conditional case.

The part of the assertion pertaining to the case of real $ q $ is proved similarly but easier -- in this case instead of \eqref{step} we are going to have simply 
\[\left| \rho_j - \chi ( p_1 ) \log p_1 \right| =  ||\rho_j | - \log p_1 | |  . \] 
 \end{proof}
 
 
Thus it remains to find an analytic function $g_1 $ in the halfplane 
$ \Re s > 1 $ of the form \[ g_1(s)=\int_1^{\infty}q(x)x^{-s}dx,\] with $ q $ vanishing as $ x\to +\infty $ which admits meromorphic extension to the halfplane $\Re s>21/40 $ with the prescribed poles and respective residues. 

\begin{lemma}
Let $g$ be an analytic function in the halfplane $ \Re z > 1 $ such that 
$\sup |z|^2 |g(z)| < \infty $. Then there exists a continuous function, $q$, $ q ( s ) = o ( 1 ) $, $ s\to +\infty $, such that \[ g(s)=\int_1^{\infty}q(s)x^{-s}dx, \; \Re s > 1 . \]
If additionally $ g ( s) $ is real for real $ s $, then $ q $ is real. 
\end{lemma}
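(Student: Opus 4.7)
The plan is to produce $ q $ by inverting a Mellin transform. Substituting $ x = e^u $ recasts the target identity $ g(s) = \int_1^\infty q(x) x^{-s}\, dx $ as $ g(s) = \int_0^\infty Q(u)\, e^{-(s-1)u}\, du $ with $ Q(u) := q(e^u) $, so the task becomes constructing a bounded continuous $ Q $ on $ [0, +\infty) $ vanishing at infinity whose Laplace transform at $ s - 1 $ is $ g(s) $.

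The natural candidate is the Bromwich integral
\[ Q(u) := \frac{1}{2\pi i} \int_{c-i\infty}^{c+i\infty} g(w)\, e^{(w-1)u}\, dw, \]
which converges absolutely for any $ c > 1 $ by the hypothesis $ |g(w)| \le C/|w|^2 $. A contour shift inside a tall rectangle, with the horizontal pieces vanishing as $ T \to \infty $ by the $ 1/|w|^2 $ decay, shows that the value does not depend on $ c > 1 $, and continuity of $ Q $ in $ u $ follows from dominated convergence. Letting $ c \to +\infty $ in the trivial bound $ |Q(u)| \le C' e^{(c-1)u}/c $ shows $ Q(u) = 0 $ for $ u < 0 $.

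The crux is establishing $ Q(u) \to 0 $ as $ u \to +\infty $. Parametrising $ Q(u) = \frac{e^{(c-1)u}}{2\pi} \int g(c+it)\, e^{itu}\, dt $, I would use the uniform pointwise bound $ |g(c+it)| \le C/(1+t^2) $ for $ c \ge 1 $, together with the observation that $ g $ is a bounded analytic function on $ \Re s > 1 $ and hence admits non-tangential boundary values $ g(1+it) $ almost everywhere by Fatou's theorem. Dominated convergence in the limit $ c \to 1^+ $ then yields
\[ Q(u) = \frac{1}{2\pi} \int_{-\infty}^\infty g(1+it)\, e^{itu}\, dt. \]
These boundary values lie in $ L^1(dt) $ by the pointwise bound, so the Riemann--Lebesgue lemma gives $ Q(u) \to 0 $ as $ |u| \to \infty $. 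Setting $ q(x) := Q(\log x) $ produces the required continuous function on $ [1, +\infty) $ with $ q(x) = o(1) $.

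To finish, I would verify the Mellin identity by Fubini: for $ 1 < c < \Re s $,
\[ \int_1^\infty q(x) x^{-s}\, dx = \int_0^\infty Q(u) e^{-(s-1)u}\, du = \frac{1}{2\pi i} \int_{c-i\infty}^{c+i\infty} \frac{g(w)}{s-w}\, dw, \]
and closing the contour to the right recovers $ g(s) $ by Cauchy's integral formula, using the $ 1/|w|^2 $ decay to kill the semicircle. The reality of $ q $ in the real case follows from the Schwarz symmetry $ g(1-it) = \overline{g(1+it)} $, which forces the Fourier integral defining $ Q $ to be real. The main obstacle is the boundary passage $ c \to 1^+ $: the hypothesis on $ g $ has to be upgraded to the existence of integrable boundary values on the line $ \Re s = 1 $, for which the boundedness of $ g $ and Fatou's theorem must be invoked; everything downstream is routine Fourier analysis.
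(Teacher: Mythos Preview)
Your proof is correct and follows essentially the same route as the paper: both construct $q$ as the inverse Fourier transform of the boundary values $g(1+it)$ and obtain the decay $q\to 0$ from the Riemann--Lebesgue lemma. The paper packages the argument via the Hardy class $H^2_+$ and the Paley--Wiener theorem (which supplies the support condition on $p$ directly), whereas you reprove those facts by hand with the Bromwich integral, a contour shift, and Fatou's theorem---a slightly more self-contained but mathematically equivalent argument.
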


\begin{proof}
Consider the function $h(t)=g(-it+1)$. The function $h$ belongs to the Hardy class $ H^2_+ $, hence the restriction $\left. h \right|_{\mathbb R } $ is the inverse Fourier transform of a certain function $ p \in L^2 ( \mathbb R ) $, vanishing on the negative real axis. Since $ \left.h \right|_{ \mathbb R } \in L^2 \cap L^1 $, the function $ p $ is the classical Fourier transform of $ h $, hence it is continuous and vanishes as $ x \to +\infty $
by the Riemann--Lebesgue lemma. The function $q(s):= p(\log s)$ then also vanishes as $ s \to +\infty $. Finally we have (recall that $s=-it+1$):  
\begin{align*}\int_1^{\infty}q(x)x^{-s} dx =\int_0^{\infty}q(e^y)e^{(1-s)y}dy =\int_0^{\infty}p(y)e^{ity}d y=h(t)=g(s) , \end{align*} as required.
Finally, if $ g (s) $ is real for real $ s $, then $ \overline {h ( t ) } = h ( -t ) $ and therefore $ p $ and $ q $ are real.
\end{proof}

\medskip
\bfseries 
\begin{center}
Mellin transforms with prescribed poles and residues in a strip
\end{center}
\mdseries

In this section we construct the function $ g $. Let $\alpha = 21/40 $ in the unconditional case, and $ \alpha = 1/2 $ if RH is satisfied. Assume first that the sets $ Z $ and $ P $ have no accumulation points at finite distance. In view of lemma 2, Theorem 2 for this case will be proven if we manage to find a function $g$ analytic in the halfplane $ \Re s > 1 $ and satisfying $\sup_{ \Re z > 1 } |g(z)|\, |z|^2< \infty $, which admits meromorphic extension to the halfplane $ \Re z > \alpha $ with the given poles and residues in the strip $\alpha < \Re z <1 $.

Notice first that, given a point $z_0$, $\alpha< \Re z_0< 1 $, and a number $ C > 0 $, one can choose $ n = n ( C , z_0 )$ large enough so that the function \[ g_{z_0}(z) =\frac{1}{(z-z_0)(z-z_0+1)^{2n}} \] 
has the following properties, 

(i) $|g_{z_0}(z)| <C$ for $ \Re z >1 $.

(ii) $ g_{ z_0 } $ is analytic in $\{ \Re z > \alpha \} $ except at $ z_0 $,  has a simple pole at $z_0$ with $ \operatorname{Res}_{ z_0 } g_{ z_0 } = 1 $. 

(iii) $|g_{z_0}(z)|<C$ for $|z-z_0|> 3 $, $ \Re z >\alpha$. 

\begin{lemma}
Let $ \Sigma $ be a subset of the strip $\alpha< \Re z < 1 $ having no accumulation points at finite distance, and $ m \colon \Sigma \longrightarrow \mathbb C \setminus \{ 0 \} $ be an arbitrary function. Then there exists a meromorphic function $ g $ in the halfplane $\Re z >\alpha$, whose set of poles coincides with $ \Sigma $, all poles are simple, $ \operatorname{Res}_z g =  m ( z ) $ for $ z \in \Sigma$, and \[\sup_{ \Re z > 1 } |g(z)| |z|^2 < \infty .\]
If additionally the set $ \Sigma $ is symmetric with respect to the real line, $ m $ is real valued, and $ m ( z ) = m ( \overline z ) $, then $ g(z) $ is real for real $ z $. 
\end{lemma}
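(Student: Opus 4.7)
The plan is to build $g$ as an infinite series of building blocks $g_{z_k}$ coming from the preceding display. Since $\Sigma$ has no accumulation points at finite distance, it is at most countable; fix an enumeration $\Sigma = \{z_k\}_{k\ge 1}$ and set $C_k := 2^{-k}/|m(z_k)|$ (well-defined because $m(z_k) \neq 0$). Choose $n_k = n(C_k, z_k)$ large enough that $g_{z_k}$ satisfies properties (i)--(iii) with constant $C_k$. The requirement of the lemma demands more than (i), namely a $|z|^2$ decay; fortunately this can be arranged by enlarging $n_k$ further. Indeed, splitting $\Re z > 1$ into the regions $|z| \le 2(|z_k|+1)$ and $|z| > 2(|z_k|+1)$ and using that $|z - z_k + 1| \ge 1 + \delta_k$ on the half-plane (with $\delta_k := 1 - \Re z_k > 0$), one sees that $\sup_{\Re z > 1}|z|^2|g_{z_k}(z)|$ is bounded by a $z_k$-dependent constant times $(1 + \delta_k)^{-2n_k} + (|z_k|+1)^{1 - 2n_k}$, which tends to $0$ as $n_k \to \infty$. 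So we may arrange
\[
\sup_{\Re z > 1} |z|^2\,|g_{z_k}(z)| < C_k .
\]

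Define $g(z) = \sum_{k \ge 1} m(z_k)\,g_{z_k}(z)$. On $\Re z > 1$ the strengthened bound gives $|z|^2|g(z)| \le \sum_k 2^{-k} = 1$, establishing the required estimate. For meromorphic extension, let $K$ be a compact subset of $\{\Re z > \alpha\}$. By local finiteness of $\Sigma$, only finitely many $z_k$ lie within distance $3$ of $K$; for all other $k$ property (iii) gives $|m(z_k) g_{z_k}(z)| < 2^{-k}$ uniformly on $K$, so the corresponding tail is uniformly convergent and holomorphic on $K$. Splicing back the finitely many initial terms, each meromorphic by (ii) with a unique simple pole at $z_k$ of residue $1$, shows that $g$ is meromorphic on $\{\Re z > \alpha\}$ with pole set exactly $\Sigma$ and $\operatorname{Res}_{z_k} g = m(z_k)$, as no other summand contributes to the singularity at $z_k$.

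For the symmetric case, enumerate $\Sigma$ so that conjugate pairs $\{z_k, \bar z_k\}$ appear together and are assigned the same exponent $n_k$. The explicit formula yields $\overline{g_{z_0}(z)} = g_{\bar z_0}(\bar z)$, and combined with the hypotheses $m(\bar z) = m(z) \in \mathbb R$ and $\bar \Sigma = \Sigma$ this gives $\overline{g(z)} = g(\bar z)$; in particular $g$ is real on the real line.

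The main technical point is the strengthening of (i) to incorporate the $|z|^2$ factor. This step is not literally supplied by the three listed properties, but it is forced by the explicit form of $g_{z_0}$ and the uniform separation $\delta_k > 0$ between $z_k$ and the half-plane $\{\Re z > 1\}$. Once this strengthening is in hand the remainder is routine: term-by-term meromorphy, local finiteness, and uniform convergence of the tail assemble the required $g$.
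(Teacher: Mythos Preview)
Your proof is correct. The overall architecture matches the paper's---sum the building blocks $g_{z_k}$ with suitably chosen exponents $n_k$ and use local finiteness plus property (iii) for uniform convergence on compacts---but you handle the $|z|^{-2}$ decay on $\{\Re z>1\}$ differently. The paper does \emph{not} strengthen (i); instead it fixes a zero-free analytic weight $G_1$ on $\{\Re z>\alpha\}$ with $G_1(z)=O(|z|^{-2})$ (e.g.\ $G_1(z)=e^{-z}z^{-2}$, real on the real axis), takes $C_i=|G_1(p_i)|/(|m(p_i)|2^{i+1})$, and sets
\[
g(z)=G_1(z)\sum_i m(p_i)\,\frac{g_i(z)}{G_1(p_i)}.
\]
Property (i) alone then bounds the series by $1$ on $\{\Re z>1\}$, and the prefactor $G_1$ supplies the decay. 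Your route trades this extra function for a short estimate showing $\sup_{\Re z>1}|z|^2|g_{z_k}(z)|\to 0$ as $n_k\to\infty$; this is legitimate and arguably more self-contained, while the paper's version keeps the three properties (i)--(iii) as a clean black box. The symmetry argument is handled the same way in spirit (pair conjugates and use $\overline{g_{z_0}(z)}=g_{\bar z_0}(\bar z)$); the paper is equally brief there.
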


Note that the function $ m $ needed for Theorem 2 is in fact integer valued. 

\begin{proof} Let $ G_1 $ be an arbitrary function analytic in the halfplane $\Re z>\alpha$, having no zeroes, real at real $ z $, and satisfying $ G_1 ( z ) = O ( |z|^{-2} ) $ as $ |z| \to \infty $. $G_1(z)=e^{-z}z^{-2}$ will do.

Fix an arbitrary enumeration of $ \Sigma $. Given a $p_i \in \Sigma $, define $ g_i $ to be the function $ g_{ p_i } $ satisfying properties (i)--(iii) with \[ C=\frac{|G_1(p_i)|}{ |m (p_i)| 2^{i+1}}. \] Let
\begin{equation}\label{g} g(z)= G_1(z) \sum_i m ( p_i) \frac{g_i(z)}{G_1(p_i)}. \end{equation} Let us first check that the series in the rhs converge absolutely at any point $ z \notin \Sigma $ in the halfplane $ \Re z > \alpha $, the convergence being uniform on compacts in $ \{ \Re z > \alpha \} \setminus \Sigma $, and thus the function $ g $ is meromorphic with simple poles at $ \Sigma $ and no other singularities.

Indeed let $z \notin \Sigma$, $ \Re z > \alpha $. Clearly $|z-p_i|\le 3 $ for at most finitely many $ i $'s. If $|z-p_k|> 3 $ for some $ k $, then \[ |g_k(z)| < \frac{|G_1(p_k)|}{|m ( p_k )| 2^{k+1}}, \] by (iii) from whence \[ \left| m ( p_k ) \frac{g_k(z)}{G_1(p_k)}\right|< 2^{-k-1} ,\] and the convergence is proven. The equality $ \operatorname{Res}_{ p_i } g = m ( p_i ) $ is obvious. It remains to notice that for $\Re z> 1 $ we have \[ \left| m ( p_i ) \frac{g_i(z)}{G_1(p_i)} \right|\le 2^{-i-1} ,\] hence $ | g ( z) | \le | G_1 ( z )| $, and thus $ \sup_{ \Re z \ge 1} |z|^2 |g(z)| < \infty $, as required.

The assertion about reality is immediate from \eqref{g}.
\end{proof}

Theorem 2 is thus proved in the partial case when the sets $ Z $ and $ P $ do not accumulate at finite distance. The general case is reduced to this one via dyadic decomposition of the strip in the same way as in \cite[Section 8.1]{S}.
%


\begin{thebibliography}{90}
\bibitem{S} K. Seip, Universality and distribution of zeros and poles of some zeta functions,  J. Anal. Math. \textbf{141}(2020), no. 1, 331--381. arXiv:1812.11729.

\bibitem{Hels} H. Helson, Compact groups and Dirichlet series, Ark. Mat. \textbf{8}(1969), 139--143.

\bibitem{BH} R. C. Baker, G. Harman, and J. Pintz, The difference between consecutive primes. II, Proc. London
Math. Soc. \textbf{83}(2001), 532--562.

\bibitem{saks} E. Saksman and C. Webb, The Riemann zeta function and Gaussian multiplicative chaos: statistics on
the critical line, arXiv:1609.00027.

\bibitem{BR} I. Bochkov and R. Romanov, On zeroes and poles of Helson zeta functions. J. Funct. Anal. \textbf{282}(2022)  109398.

\bibitem{Dudek} A. Dudek, On the Riemann hypothesis and the difference between primes, Int. J. Number Theory \textbf{11}(2015), 771--778.

\end{thebibliography}
\end{document}